\newtheorem{lemma}{Lemma}
\begin{document}
\baselineskip=17pt
\title[On Erd\H{o}s constant]{On Erd\H{o}s constant}

\author{Vladimir Shevelev}
\address{Department of Mathematics \\Ben-Gurion University of the
 Negev\\Beer-Sheva 84105, Israel. e-mail:shevelev@bgu.ac.il}

\subjclass{11B83}

\begin{abstract}
In 1944, P. Erd\H{o}s \cite{1} proved that if
$n$ is a large highly composite number (HCN) and $n_1$ is the next HCN, then
$$n<n_1<n+n(\log n)^{-c},$$
where $c>0$ is a constant. In this paper, using numerical results by D. A. Corneth,
we show that most likely $c<1.$
\end{abstract}

\maketitle

\section{Introduction}
In 1915, Ramanujan \cite{2} introduced \slshape highly composite numbers
\upshape \enskip (HCN) as positive integers $n$ such that $d(m)<d(n)$ for all $m<n.$
(cf. A002182\cite{3}). At the same time, he introduced a more wide sequence of
\slshape largely composite numbers\upshape \enskip (LCN's) as
positive integers $n$ such that $d(m)\leq d(n)$ for all $m<n.$ (cf. A067128\cite{3})
In 1944, P. Erd\H{o}s \cite{1}, strengthening inequality of Ramanujan,
proved that if $n$ is a large HCN
and $n_1$ is the next HCN, then
\begin{equation}\label{1}
 n<n_1<n+n(\log n)^{-c},
\end{equation}
  where $c>0$ is a constant. This result is equivalent to the following: the number
of HCN's$<=x$ is greater than $(\log x)^{1+c}.$ At the beginning of the article he
writes: "At present I cannot decide whether the number of HCN's not exceeding $x$
is greater than $(\log x)^{\kappa}$ for every $\kappa."$  In this paper, using numerical results by D. A. Cormeth, we show that most likely $c<1,$ or, the same, in the cited Erd\H{o}s'
question, only $\kappa<2.$

\section{Sequence A273379 [3]}
Erdos \cite{1} noted that every HCN is divisible by every prime less than its
greatest prime divisor $p.$ The author with P. J. C. Moses considered
the sequence "LCN's $n$ which are not divisible by all the primes $< p,$ where $p$
is the greatest prime divisor of $n."$ The first few numbers of this sequence are
\begin{equation}\label{2}
3, 10, 20, 84, 168, 336, 504, 660, 672, 3960, 4680, 32760, 42840, ...
\end{equation}
Consider prime power factorization of these terms $\geq10:$
$$2\cdot5,\enskip 2^2\cdot5,\enskip 2^2\cdot3\cdot7,\enskip
 2^3\cdot3\cdot7,\enskip 2^4\cdot3\cdot7,\enskip
 2^3\cdot3^2\cdot7,\enskip 2^2\cdot3\cdot5\cdot11,\enskip 2^5\cdot3\cdot7,$$
$$2^3\cdot3^2\cdot5\cdot11,\enskip 2^3\cdot3^2\cdot5\cdot13, \enskip2^3\cdot3^2\cdot5\cdot7\cdot13,\enskip 2^3\cdot3^2\cdot5\cdot7\cdot17,...$$
Note that among the first 12 terms $\geq10,$ in ten terms there is missed only one
prime between the greatest prime divisor and the second greatest prime divisor,
while in two terms there missed two primes. D. A. Corneth (A273415\cite{3}) found the
smallest LCN's with
$i$ missed primes between the greatest prime divisor and the second greatest prime
divisor, such that smaller primes all divide the terms, $i=1,2,...,10:$
$$10, 4680, 6585701522400, 193394747145600, 27377180785991836800,$$
 $$29378941900252048776672000,
 5384823686347760468943298225056000,$$
 $$404593694258692410380118300618528000,$$
 $$1714431214566179268370439406441900195214656000,$$
\begin{equation}\label{3}
 180656647480221782329653424360823828484237888000.
\end{equation}
He asks, whether this sequence is infinite?
\section{Infiniteness of sequence (3)}
Let $n$ be a large HCN. Let its greatest prime divisor be the $k$-th prime number
$p_k,\enskip k=k(n).$ It is known \cite{1}, that $p_k||n.$
\begin{lemma}\label{L1}\cite{1}
\begin{equation}\label{4}
c_1\log n <p_k<c_2\log n.
\end{equation}
\end{lemma}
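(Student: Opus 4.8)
The plan is to prove the two inequalities in \eqref{4} separately, using only Chebyshev-type bounds for $\theta(x)=\sum_{p\le x}\log p$ and $\pi(x)$, together with two structural facts about HCN that I take as given from Ramanujan and from Erd\H{o}s \cite{1}: every prime $\le p_k$ divides $n$, and for large $n$ one has $p_k\|n$, while the exponents in $n=\prod_{i=1}^k p_i^{a_i}$ are non-increasing, $a_1\ge a_2\ge\cdots\ge a_k=1$.

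For the upper bound $p_k<c_2\log n$, I would simply exploit divisibility by all primes $\le p_k$, which gives $n\ge\prod_{p\le p_k}p$ and hence
$$\log n=\sum_{i=1}^k a_i\log p_i\ge\sum_{p\le p_k}\log p=\theta(p_k).$$
Chebyshev's lower estimate $\theta(x)>c\,x$ then yields $\log n>c\,p_k$, i.e. $p_k<c^{-1}\log n$, so one may take $c_2=c^{-1}$. No use of the full prime number theorem is needed, only the one-sided Chebyshev bound.

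The lower bound $p_k>c_1\log n$ is the substantive half; equivalently I must bound $\log n$ from above by a constant times $p_k$, which forces me to control the exponents $a_i$. The key step is a swapping argument. For $p=p_i$ with $p_i^{a_i}\|n$, set $j=\lceil\log p_k/\log p_i\rceil$, so that $p_i^{\,j}\ge p_k$, and compare $n$ with the integer $n'=n\,p_k/p_i^{\,j}<n$. If $a_i<j$ then already $a_i\log p_i<\log p_k+\log p_i\le 2\log p_k$, so assume $a_i\ge j$; then $n'$ is a positive integer with $n'<n$, and since $n$ is an HCN we must have $d(n')<d(n)$. Writing this out, with the exponent of $p_k$ rising from $a_k=1$ to $2$ and that of $p_i$ dropping from $a_i$ to $a_i-j$, gives
$$\frac{a_k+2}{a_k+1}\cdot\frac{a_i-j+1}{a_i+1}<1,$$
which rearranges to $a_i<(a_k+2)j-1<3\lceil\log p_k/\log p_i\rceil$, whence $a_i\log p_i\le 6\log p_k$ uniformly in $i$. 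Summing over $i$,
$$\log n=\sum_{i=1}^k a_i\log p_i\le 6\log p_k\sum_{p\le p_k}1=6\,\pi(p_k)\log p_k\ll p_k,$$
the last inequality again by Chebyshev's bound $\pi(x)\ll x/\log x$. Thus $\log n\le C\,p_k$ and one takes $c_1=C^{-1}$.

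I expect the swapping inequality to be the only genuine obstacle. One must choose the traded prime to be $p_k$ precisely because its exponent is $1$ (so that $(a_k+2)/(a_k+1)$ is bounded away from $1$ by a definite amount), and the increment $j$ must be taken so that $n'$ is simultaneously a true integer, strictly below $n$, and close enough to $n$ that the divisor inequality $d(n')<d(n)$ is informative rather than vacuous. The bookkeeping of the ceiling $\lceil\cdot\rceil$ and of the boundary case $a_i<j$ must be done carefully, but once the uniform exponent bound $a_i\log p_i\ll\log p_k$ is in hand, both halves of \eqref{4} reduce to elementary Chebyshev estimates, and the constants $c_1,c_2$ emerge without any appeal to asymptotics.
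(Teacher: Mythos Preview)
The paper does not supply its own proof of this lemma; it merely quotes the inequality from Erd\H{o}s \cite{1} and uses it as input. So there is nothing in the present paper to compare your argument against.

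That said, your proof is correct and is essentially the classical argument (going back to Ramanujan and Erd\H{o}s). The upper bound via $\log n\ge\theta(p_k)\gg p_k$ is immediate. For the lower bound your swap $n\mapsto n\,p_k/p_i^{\,j}$ with $j=\lceil\log p_k/\log p_i\rceil$ is exactly the right device: the choice of $p_k$ as the gained prime works because $a_k=1$ makes the gain factor $(a_k+2)/(a_k+1)=3/2$ bounded away from~$1$, and your handling of the boundary case $a_i<j$ and of the ceiling is clean. The resulting uniform bound $a_i\log p_i\le 6\log p_k$ together with $\pi(p_k)\log p_k\ll p_k$ gives $\log n\ll p_k$, as required. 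No gaps.
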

By Lemma \ref{L1} and prime number theorem, we have
\begin{equation}\label{5}
k=O(\log n/\log\log n).
\end{equation}
\begin{lemma}\label{L2}
Let $n_1$ be the next HCN after $n.$ Then all numbers in the interval  $[n, n_1)$ 
of the form
\begin{equation}\label{6}
np_{k+1}/p_k,...,np_{k+r}/p_k,
\end{equation}
if they exist, are LCN's.
\end{lemma}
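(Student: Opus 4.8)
The plan is to establish two facts and then combine them: first, that each candidate $m_j := np_{k+j}/p_k$ has exactly the same number of divisors as $n$; and second, that $d(n)$ is the maximal value attained by the divisor function on the entire initial segment $[1,n_1)$. Together these will immediately force every such $m_j$ that happens to lie in $(n,n_1)$ to be an LCN.

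First I would compute $d(m_j)$. Since $p_k$ is the greatest prime divisor of $n$, no prime $p_{k+j}>p_k$ divides $n$; and by the fact $p_k\|n$ recorded just before Lemma~\ref{L1}, the exponent of $p_k$ in $n$ equals $1$. Hence $m_j=(n/p_k)\,p_{k+j}$ is obtained from $n$ by replacing the simple prime factor $p_k$ with the simple prime factor $p_{k+j}$, so its canonical factorization carries the same multiset of exponents. Therefore $d(m_j)=d(n)$, and plainly $m_j>n$ because $p_{k+j}>p_k$.

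The heart of the argument is to prove that $d(t)\le d(n)$ for every integer $t$ with $1\le t<n_1$. For $t<n$ this is the defining HCN property of $n$, which even gives the strict inequality $d(t)<d(n)$, and for $t=n$ it is trivial. For $n<t<n_1$ I would argue by minimal counterexample: suppose some such $t$ satisfied $d(t)>d(n)$ and take the least one. Then every $m<t$ would obey $d(m)<d(t)$ --- for $m\le n$ by the HCN property of $n$ together with $d(n)<d(t)$, and for $n<m<t$ by the minimality of $t$ --- so $t$ would itself be an HCN lying strictly between $n$ and $n_1$, contradicting that $n_1$ is the next HCN after $n$. Hence no such $t$ exists and $\max_{1\le t<n_1} d(t)=d(n)$.

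Finally I would combine the two facts. If $m_j$ lies in the interval, that is $n<m_j<n_1$, then every integer $t<m_j$ also satisfies $t<n_1$, so $d(t)\le d(n)=d(m_j)$ by the two steps above; this is precisely the condition that $m_j$ be an LCN, which completes the proof. I expect the second step to be the main obstacle, since it is where one must rule out an unexpected record value of $d$ inside the gap $(n,n_1)$; the minimal-counterexample reduction is exactly what converts the hypothesis that $n_1$ is the \emph{next} HCN into the required uniform bound $d(t)\le d(n)$ across the whole interval.
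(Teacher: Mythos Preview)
Your proof is correct and follows essentially the same approach as the paper: both argue that each $m_j$ has $d(m_j)=d(n)$ and that no integer in $(n,n_1)$ can have more than $d(n)$ divisors because $n_1$ is the next HCN. You have simply made explicit the minimal-counterexample step that the paper compresses into the phrase ``between them there is no HCN,'' and you have spelled out why $p_k\|n$ ensures $d(m_j)=d(n)$.
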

\begin{proof} All numbers (\ref{6}) have the same number of divisors as $n,$ and
between them there is no any HCN, since the smallest $HCN>n$ is $n_1$ which is larger
every number (\ref{6}).
\end{proof}
\begin{lemma}\label{L3}
Let $N$ be a term of sequence $(\ref{3})$ with $r$ missed primes between the greatest
prime divisor $p_{k+r}$ and the second greatest prime
divisor $p_{k-1}$ of $N.$
Then together with $N$ all numbers
$$Np_{k+r-1}/p_{k+r}, Np_{k+r-2}/p_{k+r},...,Np_{k}/p_{k+r}$$
are LCN's (but not HCN's, except for the last number).
\end{lemma}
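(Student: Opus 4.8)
The plan is to control the divisor function $d$ across the listed numbers and then read off both assertions almost mechanically. Write $M_j = N p_{k+j}/p_{k+r}$ for $j = 0, 1, \ldots, r-1$, so that the numbers in the statement are $M_{r-1} > M_{r-2} > \cdots > M_0$, all lying strictly below $N$ because $p_{k+j} < p_{k+r}$. The first and most important point I would establish is that the greatest prime occurs in $N$ to the first power, $p_{k+r}||N$; this is the same structural fact that holds for HCN's by Lemma \ref{L1}, and it is visible in every term of (\ref{2}) and (\ref{3}). Granting it, each $p_{k+j}$ with $0 \le j \le r-1$ is one of the $r$ \emph{missed} primes $p_k,\ldots,p_{k+r-1}$ and hence does not divide $N$, so passing from $N$ to $M_j$ merely exchanges one unit-power prime factor ($p_{k+r}$) for another ($p_{k+j}$). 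Since both contribute the factor $2$ to the product formula for $d$ while leaving every other exponent untouched, I get $d(M_j) = d(N)$ for all $j$.

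With the divisor counts in hand the LCN claim is immediate. Fix $j$ and take any $m < M_j$. Then $m < M_j < N$, and since $N$ is an LCN we have $d(m) \le d(N) = d(M_j)$. As this holds for every $m < M_j$, the number $M_j$ is itself an LCN. For the HCN assertion I would use the increasing chain $M_0 < M_1 < \cdots < M_{r-1} < N$ together with the equalities $d(M_0) = \cdots = d(M_{r-1}) = d(N)$. For each $j \ge 1$ the strictly smaller number $M_{j-1}$ has the same number of divisors as $M_j$, so the strict inequality required of an HCN fails and $M_j$ is not an HCN; the same witness $M_{r-1} < N$ shows $N$ is not an HCN either. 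Only the smallest number $M_0 = N p_k/p_{k+r}$ escapes this argument, since no member of the list lies below it; and indeed $M_0$ now has the gapless prime support $p_1,\ldots,p_k$, exactly the shape an HCN is permitted to have, which is why it is left as a genuine candidate.

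The step I expect to be the crux is the opening one, namely justifying $p_{k+r}||N$: if the greatest prime occurred to a power $a \ge 2$, then trading one copy of $p_{k+r}$ for a fresh prime would change its local divisor contribution from $a+1$ to $2a$, destroying the equality $d(M_j) = d(N)$ on which everything rests. I would close this by the exchange argument itself: were $p_{k+r}^{2}\mid N$, the number $N' = N p_{k+j}/p_{k+r} < N$ (for any missed $p_{k+j}$) would carry $p_{k+r}$ to the power $a-1$ and $p_{k+j}$ to the power $1$, giving $d(N') = 2a\,d(N)/(a+1) > d(N)$ since $a \ge 2$, which contradicts $N$ being an LCN. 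Thus $p_{k+r}||N$ is forced by the LCN property of $N$ alone, so this key input is self-contained and the remainder of the proof is routine bookkeeping with the divisor function.
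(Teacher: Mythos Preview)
Your proof is correct and follows the same underlying idea as the paper's: all of the listed numbers share $d(N)$ divisors, so the LCN property of $N$ propagates downward to each $M_j$, while the chain $M_0<M_1<\cdots<M_{r-1}<N$ with equal divisor counts rules out HCN status for every member above $M_0$. You are in fact more thorough than the paper---in particular you supply a self-contained exchange argument for $p_{k+r}\parallel N$, which the paper takes for granted---but the strategy is the same.
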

\newpage
\begin{proof}
Indeed, in the opposite case, between these numbers there is a HCN $\leq N,$ but
since all they, including $N,$ have the same number of divisors, it would contradict
the condition, that $N$ is LCN. 
\end{proof}
Lemma \ref{L3} means that every number of sequence (\ref{3}) is building from an
HCN (which always has not any gap between its prime divisors $2,...,p_k$) by 
consecutive multiplication
by $p_{k+1}/p_k, p_{k+2}/p_{k+1},..., p_{k+r}/p_{k+r-1}$ with the possible maximal
$r.$ By (\ref{1}) and (\ref{6}), 
$$n<np_{k+r}/p_k<n(1+(\log n)^{-c}).$$

In order to have a real chance to obtain the numbers (\ref{3}), let require
a stronger inequality
\begin{equation}\label{7}
p_{k+r}/p_k<1+\frac{1}{2}(\log n)^{-c}.
\end{equation}
Here $\frac{1}{2}$ could be changed by any smaller positive constant.
Note that (\ref{7}) means that
\begin{equation}\label{8}
(1+o(1))(k+r)\log(k+r)/(k\log k)<1+\frac{1}{2}(\log n)^{-c},
\end{equation}
 where, by (\ref{5}), $k=O(\log n/\log\log n).$ Since, for $r<k,$
  $\log(k+r)=\log k +\log(1+r/k)=\log k+r/k+O((r/k)^2),$
  then $\log(k+r)/\log k=1+O(r/(k\log k).$
 So, by (\ref{8}), we see that (\ref{7}) yields 
 $$1+r/k+O(r/(k\log k))<1+\frac{1}{2}(\log n)^{-c},$$
 or
 $$r\leq \frac{k}{2(\log n)^c}=O((\log n)^{1-c}/\log\log n).$$ Thus if and only
  if $c<1,$ the value of $r$ could be arbitrary large for sufficiently large n.
Moreover, the existence the numbers (\ref{3}) allows to conjecture that, indeed,
 $0<c<1.$

\end{document}